\title{Untwisting 3-strand torus knots}
\author{S.~Baader, I.~Banfield, L.~Lewark}
\date{}
\let\cref\Cref
\theoremstyle{plain}
\newtheorem{theorem}{Theorem}
\newtheorem{lemma}[theorem]{Lemma}
\newtheorem{proposition}[theorem]{Proposition}
\theoremstyle{definition}
\theoremstyle{remark}
\def\N{{\mathbb N}}
\def\gt{g_t}
\def\gs{g_s}
\begin{document}

\begin{abstract} We prove that the signature bound for the topological 4\nobreakdash-genus of 3-strand torus knots is sharp, using McCoy's twisting method.
We also show that the bound is off by at most $1$ for 4-strand and 6-strand torus knots, and
improve the upper bound on the asymptotic ratio between the topological 4-genus and the Seifert genus of torus knots from 2/3 to 14/27.  
\end{abstract}

\maketitle

\section{Introduction}

The braid group on three strands $B_3$ is generated by two elements $a,b$ satisfying the braid relation $aba=bab$. In this note, we are interested in the natural closure of the positive braid $(ab)^n$ in $S^3$, known as torus link of type $T(3,n)$. Whenever $n \in \N$ is a multiple of $3$, the link $T(3,n)$ has three components; otherwise it is a knot. The topological 4-genus $\gt(K)$ of a knot~$K \subset S^3$ is defined to be the minimal genus among all surfaces $\Sigma \subset D^4$, embedded in a locally flat way, with boundary $\partial \Sigma=K$. As with the smooth version of the 4-genus invariant, the topological 4-genus of knots~$K$ is bounded below by the signature invariant~\cite{Po}: $\gt(K) \geq |\sigma(K)|/2$. The same lower bound holds with the signature invariant replaced by the maximum value of the Levine-Tristram signature function outside of the set of roots of the Alexander polynomial $\Delta_K(t)$ of $K$
$$\widehat{\sigma}(K)\coloneqq\max_{\omega \in S^1\setminus \Delta_K^{-1}(0)} |\sigma_{\omega}(K)|.$$

\begin{theorem}
\label{3torus}
Let $n\geq 4$ be a natural number not divisible by three.
Then $$\gt(T(3,n))=\frac{\widehat{\sigma}(T(3,n))}{2} =  \left\lceil\frac{2n}{3}\right\rceil.$$
\end{theorem}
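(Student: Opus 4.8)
The plan is to show the two bounds meet: the lower bound $\gt(T(3,n))\ge\widehat\sigma(T(3,n))/2$ is already available, so it remains to produce locally flat spanning surfaces of genus $\lceil 2n/3\rceil$ via McCoy's twisting method, and to check that $\widehat\sigma(T(3,n))$ is as large as $2\lceil 2n/3\rceil$.

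For the lower bound it suffices to exhibit a \emph{single} $\omega\in S^1\setminus\Delta_{T(3,n)}^{-1}(0)$ with $|\sigma_\omega(T(3,n))|\ge 2\lceil 2n/3\rceil$, since then automatically $\gt(T(3,n))\ge\lceil 2n/3\rceil$, and equality there (once the upper bound is in) forces $\widehat\sigma(T(3,n))=2\lceil 2n/3\rceil$ because $\widehat\sigma\le 2\gt$. I would compute $\sigma_\omega(T(3,n))$ from the classical lattice-point formula for Levine--Tristram signatures of torus knots, counting pairs $(i,j)$ with $0<i<3$, $0<j<n$ according to the position of $\tfrac i3+\tfrac jn$ relative to $\arg(\omega)/2\pi$. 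When $n\equiv 4,5\pmod 6$ the ordinary signature at $\omega=-1$ already attains $2\lceil 2n/3\rceil$; when $n\equiv 1,2\pmod 6$ it falls short by $2$, and one must instead take $\omega$ just past the relevant jump of the signature function. Running through the four residues of $n$ modulo $6$ and confirming the count lands exactly on $2\lceil 2n/3\rceil$ is the first task.

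For the upper bound I would induct on $n$ with base cases $T(3,4)$ and $T(3,5)$: for these the Seifert genus ($3$ and $4$) already equals $|\sigma|/2$, so the Seifert surface is optimal and $\gt(T(3,4))=3$, $\gt(T(3,5))=4$, with no twisting needed. The inductive step is a locally flat cobordism from $T(3,n)$ to $T(3,n-3)$ of genus at most $2$. This is where McCoy's twisting enters: writing $T(3,n)$ as the closure of $(\sigma_1\sigma_2)^{n}=\Delta^{2}(\sigma_1\sigma_2)^{n-3}$, where $\Delta^2=(\sigma_1\sigma_2)^3$ is central, one locates an unknotted circle $c$ in the closed-braid diagram encircling some ascending braid strands together with an equal number of descending closure arcs, so that $\mathrm{lk}(c,T(3,n))=0$; a $-1$-twist along $c$ removes the central full twist and leaves $T(3,n-3)$. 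McCoy's lemma converts such a null-homologous twist into a locally flat cobordism whose genus is bounded in terms of the intersection number of $c$ with the knot, and since $\lceil 2(n-3)/3\rceil=\lceil 2n/3\rceil-2$, the recursion gives $\gt(T(3,3k+1))\le 3+2(k-1)=2k+1=\lceil 2n/3\rceil$ and $\gt(T(3,3k+2))\le 4+2(k-1)=2k+2=\lceil 2n/3\rceil$.

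The main obstacle is the geometry of the inductive step: choosing the twisting circle(s) so that the resulting knot is genuinely $T(3,n-3)$ — which needs a short computation in $B_3$ using $aba=bab$ and the centrality of $\Delta^2$ — and pinning the genus cost to exactly the value that makes the induction close. The cost must be neither too small, which would contradict the already-proven lower bound, nor too large, which would fail to give sharpness, so the bookkeeping in McCoy's lemma must be done carefully. The signature computation, though elementary, also needs attention because $\omega=-1$ is not always optimal. With both pieces in place, $\lceil 2n/3\rceil=\widehat\sigma(T(3,n))/2\le \gt(T(3,n))\le\lceil 2n/3\rceil$ closes the argument.
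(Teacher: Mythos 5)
Your lower-bound plan is sound and essentially the paper's: one computes $\widehat{\sigma}$ via Litherland's description of the jumps, using the ordinary signature (with its periodicity) for $n\equiv 4,5 \pmod 6$ and the first jump past $1/2$ for $n\equiv 1,2\pmod 6$, and the base cases $T(3,4)$, $T(3,5)$ are fine. The gap is in the upper bound, which is the actual content of the theorem. Your inductive step --- a \emph{single} null-homologous twist along a circle $c$ that ``removes the central full twist and leaves $T(3,n-3)$'' --- does not exist as described. A circle encircling only the three braid strands has linking number $3$ with the knot, so a twist along it is not null-homologous; and if you enlarge $c$ to also encircle descending closure arcs so that $\mathrm{lk}(c,K)=0$, then a $-1$ twist along $c$ does not simply delete $(\sigma_1\sigma_2)^3$: it also inserts a full twist among the closure arcs and links the ascending and descending groups with each other, and the resulting knot is not $T(3,n-3)$. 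Moreover, the cost accounting is inconsistent with McCoy's result: his theorem gives genus cost at most $1$ per null-homologous twist, regardless of how many strands the twist involves (this is exactly why $\gt\leq t$); it is not ``bounded in terms of the intersection number of $c$ with the knot''. So if a single twist really related $T(3,n)$ and $T(3,n-3)$, iterating would yield $\gt(T(3,n))\leq n/3+O(1)$, contradicting the lower bound $\widehat{\sigma}/2=\lceil 2n/3\rceil$ that you plan to prove. The step therefore cannot be fixed by more careful bookkeeping; the mechanism itself is wrong.

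What is actually needed (and what the paper does in \cref{untwisting}) is a scheme costing \emph{two} twists per full twist removed, realized concretely: the braid $abbaabba$ is turned into $bb$ by one twist on four strands (two up, two down) followed by one crossing change (\cref{fig:3braid}), whence $(ab)^6$ becomes $b^6$ after two twists. This alone does not close an induction on $n\mapsto n-3$; one must combine it with explicit positive-braid presentations of iterated full twists such as $[3,5^k,4,3,5^k,4]$ (\cref{fulltwists}), slide double full twists next to the negative blocks $(BA)^{6k+9}$, and finish with a few extra crossing changes, treating $T(3,7)$, $T(3,10)$, $T(3,13)$ and two infinite families separately. That bookkeeping is where the bound $t(T(3,3k+4))\leq 2k+3$, $t(T(3,3k+5))\leq 2k+4$ comes from, and your proposal contains no substitute for it.
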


We believe that the equality $\gt=\widehat{\sigma}/2$ holds for a much larger class of torus knots, possibly for all. This can be seen as a topological counterpart of the local Thom conjecture, which states that the smooth 4-genus $\gs$ of torus knots coincides with their Seifert genus~\cite{KM,Ru2,Ra}. Unlike in the smooth case, where the hard part is finding suitable lower bounds, the difficulty in the topological case is figuring out genus-minimising surfaces (see~\cite{Ru1} and~\cite{BFLL} for first attempts in this direction). We will not see any of these surfaces. Rather, we will find a precise upper bound for the topological 4-genus via an operation called null-homologous twisting, which has recently received some attention \cite{In,Liv,MC,MC2}.
A null-homologous twist is an operation on oriented links that inserts a full twist into an even number $2m$ of parallel strands, $m$ of which point upwards, and $m$ of which point downwards (see e.g.\ \cref{fig:twist}).
Throughout this paper, we will use the term \emph{twist} for a null-homologous twist. The case of two strands corresponds to a simple crossing change. For a knot $K$, we define the \emph{untwisting number} $t(K)$ to be the minimal number of twists needed to transform $K$ into the trivial knot, as in~\cite{In}.
Relying on Freedman's disc theorem~\cite{Fr}, McCoy proved that the untwisting number is an upper bound for the topological 4-genus of knots~\cite{MC}. This is the tool we use to construct the genus-minimising surfaces in \cref{3torus}.

Let us take another look at the resemblance of the smooth and topological setting. Writing $s$ and $u$ for the Rasmussen invariant and unknotting number, respectively, it follows from the (smooth) local Thom conjecture that the inequalities
$$
s(K)/2 \leq \gs(K) \leq u(K),
$$
which hold for all knots $K$, become equalities for all torus knots:
$$
s(T(p,q))/2 = \gs(T(p,q)) = u(T(p,q)).
$$
We show that in the topological setting, in striking analogy, the inequalities
$$
\widehat{\sigma}(K)/2 \leq \gt(K) \leq t(K),
$$
which hold for all knots $K$, become equalities for all 3-strand torus knots:
$$
\widehat{\sigma}(T(3,n))/2 = t(T(3,n)) = \gt(T(3,n)).
$$
Thus in the topological setting, the untwisting number apparently takes the place that the unknotting number has in the smooth setting.

Untwisting might very well lead to the equality $g_t = \widehat{\sigma}/2 = t$ for all torus knots.
For the time being, we show that the equality is off by at most $1$ for torus knots with four and six strands.
\begin{proposition}\label{thm:46}
For all odd natural numbers $n \geq 3$,
$$
n \leq \frac{\widehat{\sigma}(T(4,n))}{2}
\leq \gt(T(4,n)) \leq t(T(4,n)) \leq \frac{2}{3}g(T(4,n)) + 2 = n + 1.$$
Moreover, for all natural numbers $n\geq 5$ coprime to $6$,
$$
\frac{3n+1}{2} \leq \frac{\widehat{\sigma}(T(6,n))}{2}
\leq
\gt(T(6,n)) \leq t(T(6,n)) \leq  \frac{3}{5}g(T(6,n)) + 3 = \frac{3n+3}{2}.
$$
\end{proposition}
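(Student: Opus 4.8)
The plan is to read off both displayed chains from three ingredients: the inequality $\widehat{\sigma}(K)/2\le\gt(K)\le t(K)$ valid for every knot (quoted above), the Seifert genus formula $g(T(p,q))=(p-1)(q-1)/2$, and two torus-knot-specific estimates. Since $g(T(4,n))=\tfrac32(n-1)$ and $g(T(6,n))=\tfrac52(n-1)$, the rightmost equalities $\tfrac23g(T(4,n))+2=n+1$ and $\tfrac35g(T(6,n))+3=\tfrac{3n+3}{2}$ are immediate, so the whole proposition reduces to establishing
$$\widehat{\sigma}(T(4,n))\ge 2n,\quad t(T(4,n))\le n+1,\qquad\widehat{\sigma}(T(6,n))\ge 3n+1,\quad t(T(6,n))\le\tfrac{3n+3}{2}.$$

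For the signature lower bounds I would use the classical description of the Levine--Tristram signature function $\omega\mapsto\sigma_\omega(T(p,q))$ as (twice) a signed lattice-point count, equivalently Litherland's formula. Evaluating $\sigma_\omega$ at a suitable $\omega$ (near a low-order root of unity, chosen to avoid the zeros of $\Delta_{T(p,n)}$, which are roots of unity of bounded order, so that it legitimately contributes to $\widehat{\sigma}$) and separating into residue classes of $n$ modulo $4$, resp.\ modulo $6$, should return exactly $2n$, resp.\ $3n+1$. That these are precisely the values needed to make the outer terms of the two chains match is a useful consistency check, and reduces this step to a finite computation.

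The heart of the matter is the untwisting bound, to be obtained by McCoy's method exactly as in the proof of \cref{3torus}: exhibit an explicit sequence of null-homologous twists from $T(4,n)$, resp.\ $T(6,n)$, to the unknot, of length $n+1$, resp.\ $\tfrac{3n+3}{2}$. For $T(4,n)$ the most economical route I can see is a single ``reduction'' twist turning $T(4,n)$ into a $3$-strand torus knot of the \emph{same} Seifert genus --- the natural candidate being $T(3,\tfrac{3n-1}{2})$, which for $n=3$ degenerates to $T(3,4)=T(4,3)$ --- after which \cref{3torus} yields $t(T(4,n))\le 1+\bigl\lceil\tfrac23\cdot\tfrac{3n-1}{2}\bigr\rceil=n+1$. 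For $T(6,n)$ a single genus-preserving reduction to a lower torus knot does not by itself meet the bound for all $n$, so one expects a slightly longer sequence: a bounded number of twists down to a $3$- or $4$-strand torus knot (invoking \cref{3torus}, resp.\ the $T(4,\cdot)$ bound just established), with the residue of $n$ modulo $6$ treated separately. In each case a reduction twist is manufactured by first isotoping the torus knot so that some small disk meets it in $2m$ strands, $m$ pointing each way, and then inserting a full twist along that disk, after which the verification is the bookkeeping of the resulting braid word.

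The main obstacle is precisely this last point: engineering the isotopy of $T(4,n)$, resp.\ $T(6,n)$, that exposes an oppositely-oriented $2m$-strand disk, checking that the full twist along it produces exactly the intended lower torus knot, and confirming that the total number of twists en route to the unknot does not exceed $n+1$, resp.\ $\tfrac{3n+3}{2}$, with particular care for small $n$ and for the residue class of $n$. This is a concrete diagrammatic argument with no conceptual shortcut; once the twist sequences are in place, chaining them with $\widehat{\sigma}/2\le\gt\le t$ and the genus formula finishes the proof.
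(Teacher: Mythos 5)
Your reduction of the proposition to the four estimates, and your plan for the signature lower bounds (ordinary signature in some residue classes, Levine--Tristram values just past a jump in the others), are consistent with what the paper does. But the core of the statement, the untwisting bounds, is exactly where your proposal has a genuine gap: you never construct the twist sequences, and the one concrete move you propose --- a \emph{single} null-homologous twist carrying $T(4,n)$ to the genus-matched candidate $T(3,\tfrac{3n-1}{2})$ --- is not established anywhere, is not a known move, and there is no reason to believe it exists; you yourself flag it as ``the main obstacle''. The known twisting moves in this setting are of a different shape: McCoy's move turns one full twist on $2m$ strands into two double full twists on $m$ strands by one twist, and the paper's new move (\cref{fig:4}) turns three full twists on four strands, $(abc)^{12}$, into four full twists on three strands, $(ab)^{12}$, at the cost of \emph{four} twists (one twist on six strands plus three crossing changes killing a $T(3,4)$ summand). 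Consequently the actual reduction from $4$ to $3$ strands costs about $n/3$ twists, not one: writing $n=12k+\varepsilon$ with $\varepsilon\in\{\pm1,\pm3,\pm5\}$, one spends $4k$ twists to reach $L[(ab)^{12k}(abc)^{\varepsilon}]$, identifies this closure case by case with a $3$-strand torus knot (or reduces it to one by a few extra twists when $\varepsilon=\pm5$), and then invokes the $3$-strand untwisting bound of \cref{untwisting}; only after this bookkeeping does the total come out as $n$ or $n+1$.

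Your plan for $T(6,n)$ has the same problem in a stronger form: ``a bounded number of twists down to a $3$- or $4$-strand torus knot'' is not what happens and is not backed by any construction. The paper instead uses the single-twist move splitting a full twist $(abcde)^6$ into $(ab)^6(de)^6$ (the six-strand analogue of \cref{fig:twist}), applies it $k$ times to $T(6,6k\pm1)$ --- so about $n/6$ twists, not $O(1)$ --- to reach a \emph{connected sum} of two copies of $T(3,6k\pm1)$, and then unties each summand via \cref{untwisting}; the count $k+2\lceil 2(6k\pm1)/3\rceil\le\tfrac{3n+3}{2}$ is what delivers the bound. So while your outer framing (the chain $\widehat{\sigma}/2\le \gt\le t$ plus the genus formula) and your arithmetic consistency checks are fine, the proof's essential content --- explicit null-homologous twist sequences with the right counts --- is missing from your proposal, and the specific reduction you guess at would need an independent (and doubtful) justification.
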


McCoy also developed an induction scheme that allows him to estimate the asymptotic ratio between the topological 4-genus and the Seifert genus of torus knots~\cite{MC}:
$$\limsup_{p,q \to \infty} \frac{\gt(T(p,q))}{g(T(p,q))} \leq \frac{2}{3}.$$

\begin{theorem}
\label{asymptotic}
$$\limsup_{p,q \to \infty} \frac{\gt(T(p,q))}{g(T(p,q))} \leq \frac{14}{27} \approx 0.519.$$
\end{theorem}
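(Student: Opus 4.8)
Since $\gt(T(p,q)) \le t(T(p,q))$ by McCoy's theorem, it suffices to show that $\limsup_{p,q\to\infty} t(T(p,q))/g(T(p,q)) \le \frac{14}{27}$. The plan is to rerun McCoy's induction scheme, feeding in the few-strand estimates of \cref{3torus} and \cref{thm:46} as its input. Recall that this scheme propagates bounds on the untwisting numbers of torus knots with few strands to a bound valid for \emph{all} torus knots: one repeatedly applies null-homologous twists to reduce a torus knot on many strands to torus knots on fewer strands --- possibly split unions or connected sums, and with smaller parameters --- accounting, at each stage, the twists spent against the Seifert genus removed and against the congruence class of the strand number, until one lands in a handful of base families. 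Run with only the input available to McCoy (essentially the equality $u(T(p,q)) = g(T(p,q))$ together with an estimate for $3$-strand torus knots), the scheme returns the constant $\frac23$.

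The improvement will come from additionally supplying the sharp bound $t(T(6,n)) \le \frac35 g(T(6,n)) + 3$ of \cref{thm:46}, along with $t(T(4,n)) \le \frac23 g(T(4,n)) + 2$ and \cref{3torus} to handle the congruence classes in which the $6$-strand bound does not directly apply. Propagating these sharper few-strand bounds through the scheme and redoing the accounting should yield
$$t(T(p,q)) \le \frac{14}{27}\, g(T(p,q)) + O(p+q),$$
with the resulting constant $\frac{14}{27}$ lying strictly below the $6$-strand seed ratio $\frac35$ --- a reflection of the fact that the scheme amplifies its input rather than merely transporting it. Dividing by $g(T(p,q)) = \frac{1}{2}(p-1)(q-1)$ and letting $p,q\to\infty$, so that the additive error becomes $o\bigl(g(T(p,q))\bigr)$, then gives the theorem.

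The conceptual ingredients --- McCoy's scheme and the few-strand estimates --- are thus already in place, and the bulk of the work, which is also where the main obstacle will lie, is the bookkeeping: first, verifying that the torus knots arising at the intermediate stages really do fall into the families $T(3,\cdot)$, $T(4,\cdot)$, $T(6,\cdot)$ covered by our base cases, i.e.\ tracking the congruence conditions on the strand numbers and disposing of the leftover cases with a coarser estimate such as $t \le g$; second, controlling the additive errors accumulated over the $O(p+q)$ reduction steps so that they remain $o\bigl(g(T(p,q))\bigr)$; and third, carrying out the elementary but somewhat delicate arithmetic that converts the $6$-strand seed ratio $\frac35$ into the final constant $\frac{14}{27}$.
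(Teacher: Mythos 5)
Your overall strategy---bound $\gt$ by $t$, feed few-strand untwisting bounds into McCoy's twisting reductions, and pass to the limit---is the paper's strategy, but as written the proposal has a genuine gap: the entire quantitative core is deferred rather than carried out. The paper's proof is a concrete computation that you never perform: a single twist turns a full twist on $2n$ strands into two parallel double full twists on $n$ strands (\cref{fig:twist}), hence turns $T(2n,2n+1)$ into the connected sum of two copies of $T(n,2n+1)$; iterating this along the family $T(3\cdot 2^k,3\cdot 2^k+1)$ costs $1+4+\dots+4^{k-1}=\tfrac13(4^k-1)$ twists and leaves $2^k$ copies of $T(3,3\cdot 2^k+1)$, each of which is untwisted in roughly $2^{k+1}$ twists by \cref{3torus}; comparing $(\tfrac13+2)4^k$ with the genus $\tfrac92 4^k$ yields $14/27$. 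Your assertion that ``redoing the accounting should yield $t\le\frac{14}{27}g+O(p+q)$'' is precisely the statement to be proved, and you explicitly postpone the arithmetic that would convert the seed into $14/27$; nothing in the proposal verifies that the scheme produces that constant. Incidentally, the paper seeds the recursion with \cref{3torus} alone, not with the $4$- and $6$-strand bounds of \cref{thm:46}; seeding at six strands with the ratio $3/5$ would in fact give the same constant (because that bound is itself one doubling step away from \cref{3torus}), but this is something one must check, not assume, and the claim that the final constant lies ``strictly below the seed ratio'' is an observation about the answer, not an argument for it.

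A second gap is the passage from special families to the $\limsup$ over all $p,q$. The splitting move is clean only when the exponent exceeds a multiple of the strand number by one, so that the leftover braid $\sigma_1\sigma_2\cdots\sigma_{2n-1}$ merely merges the two halves into a connected sum; for general parameters the leftover braid entangles the two groups of strands, the result is no longer a connected sum of smaller torus knots, and the pieces you propose to dispose of ``with a coarser estimate such as $t\le g$'' can carry a non-negligible fraction of the genus (e.g.\ when $p$ and $q$ are comparable), so the plan of tracking congruence classes directly is not obviously viable and is in any case not carried out. The paper avoids this entirely: it proves the bound only along $T(3\cdot 2^k,3\cdot 2^k+1)$ and then invokes a general principle on subadditive functions (the proof of Theorem~5 in \cite{MC}, or the discussion preceding Theorem~2 in \cite{BFLL}) to deduce the stated $\limsup$ over all $p,q\to\infty$. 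Your proposal omits this step, and without it (or a genuinely uniform reduction covering all congruence classes, which you do not supply) the bound claimed for all $(p,q)$ is not justified.
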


The proof of \cref{3torus} makes use of a calculus for positive 3-braids introduced in~\cite{Ba}, which we present in the next section. \cref{sec:3,sec:46} contain the proofs of \cref{3torus} and \cref{thm:46}. The latter follows from the former by untwisting torus knots on four and six strands via torus knots on three strands.
\cref{asymptotic} follows from McCoy's induction scheme, which we briefly review in the last section.

\section{A calculus for positive 3-braids}

Let $k_1,k_2,\ldots,k_n$ be strictly positive integers. The positive braid
$$[k_1,k_2,\ldots,k_n]\coloneqq a^{k_1}ba^{k_2}b \cdots a^{k_n}b \in B_3$$
defines a link $L[k_1,k_2,\ldots,k_n]$, via its closure. For example, the torus link of type $T(3,n)$ can be written as $L[1,1,\ldots,1]$, where the number~1 appears~$n$ times. This notation is far from unique. The full twist on three strands can be written as
$$[1,1,1]=ababab=aabaab=[2,2].$$
The double full twist can be written as 
$$aab(abaaba)aab=aaabaaabaaab=[3,3,3].$$
In the first equality, we used the fact that the full twist $abaaba \in B_3$ commutes with all 3-braids. Adding another full twist to this, we obtain the following representative for the triple full twist:
$$aaabaaab(abaaba)aaab=aaabaaaabaaabaaaab=[3,4,3,4].$$
From here, we see that the operation
$$[\ldots,x,y, \ldots] \rightarrow [\ldots,x+1,3,y+1, \ldots]$$
corresponds to adding a full twist to a given positive braid on 3 strands. With this combinatorial calculus, we obtain the following family of positive braid presentations for iterated full twists on three strands.

\begin{lemma}
\label{fulltwists}
For all $k \in \N$:
\begin{enumerate}
\item $T(3,6k+9)=L[3,5^k,4,3,5^k,4]$,
\item $T(3,6k+12)=L[3,5^{k+1},3,4,5^k,4]$,
\end{enumerate}
where $5^k$ stands for a sequence $5,\ldots,5$ of length $k$.\qed
\end{lemma}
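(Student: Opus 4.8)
The plan is to prove both identities simultaneously by induction on $k$, using the full-twist operation
$$\Phi\colon [\ldots,x,y,\ldots] \longmapsto [\ldots,x+1,3,y+1,\ldots]$$
described above. Since the full twist $\Delta^2=abaaba\in B_3$ is central, inserting it anywhere in a positive braid word $a^{k_1}b\cdots a^{k_n}b$ representing $T(3,m)$ yields a word representing $T(3,m+3)$; inserting it immediately after the letter $b$ that follows the block $a^{k_i}$ and then applying the braid relation $bab=aba$ once turns $\cdots a^{k_i}ba^{k_{i+1}}b\cdots$ into $\cdots a^{k_i+1}ba^3ba^{k_{i+1}+1}b\cdots$, which is precisely $\Phi$ acting between the entries $k_i$ and $k_{i+1}$, valid for every $1\le i\le n-1$.

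For the base case, the computations preceding the lemma give $T(3,3)=L[2,2]$, $T(3,6)=L[3,3,3]$ and $T(3,9)=L[3,4,3,4]$, which is (1) for $k=0$ (where $5^0$ denotes the empty sequence). Applying $\Phi$ to the two middle entries of $[3,4,3,4]$ -- i.e.\ to the block $4,3$ -- produces $[3,5,3,4,4]=L[3,5^1,3,4,5^0,4]$, which is (2) for $k=0$.

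For the inductive step I would show that $\Phi$ carries (1) for $k$ to (2) for $k$, and then (2) for $k$ to (1) for $k+1$; as $\Phi$ passes from $T(3,m)$ to $T(3,m+3)$, this matches $6k+9\to 6k+12\to 6(k+1)+9$. Starting from $T(3,6k+9)=L[3,5^k,4,3,5^k,4]$ and applying $\Phi$ to the central block $4,3$ (the entries in positions $k+2$ and $k+3$) gives
$$[3,5^k,5,3,4,5^k,4]=[3,5^{k+1},3,4,5^k,4],$$
which is the asserted presentation of $T(3,6k+12)$, so (2) holds for $k$. Next, applying $\Phi$ to the central block $3,4$ (positions $k+3$ and $k+4$ of $[3,5^{k+1},3,4,5^k,4]$) gives
$$[3,5^{k+1},4,3,5,5^k,4]=[3,5^{k+1},4,3,5^{k+1},4],$$
which is (1) for $k+1$. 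This completes the induction.

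Since every step is a purely mechanical braid-word manipulation, I do not anticipate a genuine obstacle. The only points needing attention are checking that the chosen slots really are internal slots of the word at hand (so that $\Phi$ is applicable), and bookkeeping the exponents as the number of $5$'s grows on one side of the word but not the other; running through the cases $k=1,2$ by hand is a quick sanity check.
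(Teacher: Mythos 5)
Your proposal is correct and matches the paper's own argument: the paper proves the lemma by induction on $k$ starting at zero, repeatedly applying the move $[\ldots,x,y,\ldots]\rightarrow[\ldots,x+1,3,y+1,\ldots]$ that inserts a central full twist, exactly as you do. Your write-up merely spells out the alternation between presentations (1) and (2) and the base case $T(3,9)=L[3,4,3,4]$, which the paper leaves implicit in its displayed sequence of words.
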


The proof is by induction on~$k$, starting at zero. A repeated application of the above move yields the desired sequence of presentations for increasing powers of the full twist: 
$$[3,5,3,4,4], \enspace [3,5,4,3,5,4], \enspace [3,5,5,3,4,5,4], \enspace [3,5,5,4,3,5,5,4], \, \ldots$$

\section{Untwisting torus knots with three strands}\label{sec:3}

\begin{lemma}
\label{untwisting}
For all $k \in \N$:
\begin{enumerate}
\item $t(T(3,3k+4)) \leq 2k+3$,
\item $t(T(3,3k+5)) \leq 2k+4$.
\end{enumerate}
\end{lemma}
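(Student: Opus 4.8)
The plan is to run an induction on $k$ whose inductive step realizes the reverse of a single ``add a full twist'' move $[\dots,x,y,\dots]\to[\dots,x+1,3,y+1,\dots]$ by exactly two twists. First I would fix convenient presentations in the calculus of the previous section. Iterating that move from the seeds $T(3,4)=L[1,1,1,1]$ and $T(3,5)=L[1,1,1,1,1]$ (reshaped with $[1,1,1]=[2,2]$) produces, exactly as in \cref{fulltwists}, explicit families $T(3,3k+4)=L[P_k]$ and $T(3,3k+5)=L[Q_k]$ in which $P_k$ (resp.\ $Q_k$) arises from $P_{k-1}$ (resp.\ $Q_{k-1}$) by one application of the move. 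It then suffices to prove, for $m\ge 7$ coprime to $3$, the descent bound $t(T(3,m))\le 2+t(T(3,m-3))$; the statement follows by induction from the two base cases $t(T(3,4))\le 3$ and $t(T(3,5))\le 4$, since $2k+3=2+(2(k-1)+3)$ and $2k+4=2+(2(k-1)+4)$.

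For the descent step I would use the mechanism behind $t(T(2,3))=1$: if $c$ lies in the interior of a Seifert surface $\Sigma$ of a knot $K$ and is unknotted in $S^3$, then a $\pm1$ full twist along the disc it bounds is a (null-homologous) twist, because $\mathrm{lk}(c,K)=0$ forces $K$ to meet that disc in $m$ upward and $m$ downward strands; and the effect on $K$ is to add $\pm1$ to the self-linking of $c$ in the Seifert form. The positive-braid surface of $L[P_k]$ carries an evident basis of $H_1$ adapted to the $[\,\cdot\,]$-blocks; the last applied move contributes one new $a^3$-block together with two incremented neighbouring blocks, and the key is to exhibit, in exactly this region, two disjoint simple closed curves $c_1,c_2$ that are unknotted in $S^3$ and for which successively twisting along $c_1$ and then $c_2$ — tracked through the calculus using only consequences of $aba=bab$, such as $a^{-1}bab=ba$ — turns $L[P_k]$ into $L[P_{k-1}]$. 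That yields $t(T(3,3(k+1)+4))\le 2+t(T(3,3k+4))$, and symmetrically for the $Q$-family.

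The base cases $t(T(3,4))\le 3$ and $t(T(3,5))\le 4$ I would settle by writing down explicit short sequences of twists unknotting these two torus knots (equivalently, curves on their genus-$3$ and genus-$6$ surfaces). Note that these counts are strictly larger than what a naive ``one move $=$ two twists'' descent to $T(3,1)$ would suggest: a value $t(T(3,4))\le 2$ is impossible, since $t\ge g_t\ge\widehat\sigma/2=3$ there. So the descent step is genuinely only available once $m-3\ge 4$, and the two base cases must be handled directly.

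The step I expect to be the main obstacle is precisely this inductive step: pinning down the two curves $c_1,c_2$ in the surface of $L[P_k]$, verifying that each is \emph{unknotted} in $S^3$ and not merely null-homologous, and checking through the braid calculus that the two full twists compose to exactly the reverse move, landing on $T(3,m-3)$ rather than on a torus knot whose index is off by one — which would wreck the sharp count $\lceil 2n/3\rceil$. A minor additional bookkeeping point is that \cref{fulltwists} records presentations only for the three-component links $T(3,6k+9)$ and $T(3,6k+12)$, so the knot analogues $L[P_k]$ and $L[Q_k]$ have to be derived in the same fashion, with the number of components tracked (twists preserve it, so $T(3,3k+4)$ and $T(3,3k+5)$ stay knots throughout).
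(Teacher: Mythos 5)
Your reduction of the lemma to a descent bound $t(T(3,m))\le 2+t(T(3,m-3))$ plus the base cases $t(T(3,4))\le 3$, $t(T(3,5))\le 4$ is arithmetically consistent, and your general mechanism (twisting along an unknotted curve in the interior of a Seifert surface is a null-homologous twist) is sound. But the descent bound itself --- that one can pass from $T(3,n)$ to $T(3,n-3)$, i.e.\ undo a single full twist on three strands while \emph{staying inside the torus knot family}, at the cost of exactly two twists --- is precisely the content of the lemma, and your proposal does not establish it: no curves $c_1,c_2$ are exhibited, and you flag this step yourself as the main obstacle. This is a genuine gap, not a routine verification. The evidence from the one explicit two-twist move available in this setting points the other way: the move of \cref{fig:3braid} converts a double full twist $(ab)^6$ into $b^6$, and applying it to $(ab)^7$ yields the closure of $ab^7$, which is $T(2,7)$, not $T(3,4)$. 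So the natural candidate for your inductive step exits the family $T(3,\cdot)$ immediately, and it is not known (nor claimed in the paper) that $T(3,n)$ and $T(3,n-3)$ are ever related by two null-homologous twists.

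The paper's proof is organized quite differently, exactly to avoid this issue. It is not an induction with a per-full-twist step; instead, for the two infinite families it writes $T(3,6k+16)$ and $T(3,6k+19)$ as $L[ab(ab)^{6m}(BA)^{N}]$, uses \cref{fulltwists} to expand the negative part $(BA)^N$ into a word containing exactly $m$ pure powers of $A$, spends two twists per double full twist to replace each slid copy of $(ab)^6$ by $a^6$ (the move of \cref{fig:3braid}), and then lets the resulting positive blocks cancel against the negative word, finishing with three crossing changes; the sporadic cases $T(3,7)$, $T(3,10)$, $T(3,13)$ are treated by separate ad hoc computations, and family (2) is reduced to family (1) by one crossing change. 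In other words, the correct count of two twists per full twist is achieved only ``globally'', after the intermediate knots have left torus-knot form, which is why your local induction cannot simply be quoted or patched without supplying a genuinely new construction for the step $T(3,n)\to T(3,n-3)$.
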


\begin{proof} The two statements are obviously true for $k=0$, since the knots $T(3,4)$ and $T(3,5)$ can be unknotted by $3$ and $4$ crossing changes, respectively. Moreover, for all $k \in \N$, the two knots $T(3,3k+4)$ and $T(3,3k+5)$ are related by a single crossing change, so we only need to prove (1). We will do so by considering the three special cases 
$T(3,7)$, $T(3,10)$, $T(3,13)$ separately, and then the two families $T(3,6k+16)$, $T(3,6k+19)$.

The key observation is that the two braids $abbaabba$ and $bb$ are related by a sequence of two twists, as shown in \cref{fig:3braid}. Here the first arrow stands for a twist on four strands, while the second arrow is a simple crossing change.
\begin{figure}[ht]
\centering
$$\raisebox{-16mm}{\includegraphics[scale=1.2]{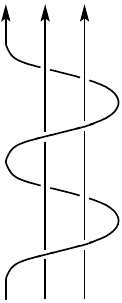}}
\enspace = \enspace
\raisebox{-16mm}{\includegraphics[scale=1.2]{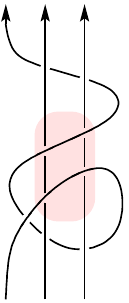}}
\enspace \rightarrow \enspace
\raisebox{-16mm}{\includegraphics[scale=1.2]{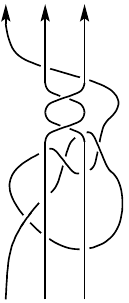}}
\enspace \rightarrow \enspace
\raisebox{-16mm}{\includegraphics[scale=1.2]{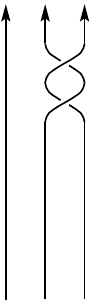}}
$$
\caption{Sequence of two twists. The first twist is on four strands, marked in red. The second twist is a crossing changes, untying the trefoil summand in the third drawing.}
\label{fig:3braid}
\end{figure}

As a consequence, the double full twist on three strands,
$$(ab)^6=abbaabbabbbb,$$
is related to the braid $b^6$ (and also to $a^6$) by a sequence of two twists.
For the first knot, $T(3,7)$, we turn the braid $(ab)^7$ into $a^7b$ by two twists, and into $ab$ by another three crossing changes, thus showing $t(T(3,7)) \leq 5$. In order to deal with the other two knots, we use the notation $A=a^{-1},B=b^{-1}$. We write
$$(ab)^{10}=(ab)^{12}BABA=(ab)^{12}ABAA=AB(ab)^{12}AA,$$
which transforms into $ABb^6a^6AA=Ab^5a^4$ by a sequence of four twists, and into $Aba^2$ by another three twists. The closure of the last braid is the trivial knot; this shows $t(T(3,10)) \leq 7$. For the knot $T(3,13)$, we observe that the braid
$$(AB)^5=(AB)^6ba=A^3BA^3BA^3Bba=A^3BA^3BA^2$$
represents the torus knot $T(3,-5)$. Therefore, we can write
$$(ab)^{13}=(ab)^{18}A^3BA^3BA^2=A^3(ab)^{12}BA^3(ab)^6BA^2,$$
which transforms into $A^3a^6b^6BA^3a^6BA^2=a^3b^5a^3BA^2$ by a sequence of six twists, and into $a^3baBA^2=a^2bA$ by another three twists. This shows $t(T(3,13)) \leq 9$. 

We now turn to the family of torus knots $T(3,6k+16)$. Using again $A=a^{-1},B=b^{-1}$, we write
$$T(3,6k+16)=L[ab(ab)^{6(2k+4)}(BA)^{6k+9}].$$
By \cref{fulltwists}, we have
$$(BA)^{6k+9}=A^3B(A^5B)^kA^4BA^3B(A^5B)^kA^4B,$$
which contains precisely $2k+4$ pure powers of $A$. We slide one double full twist $(ab)^6$ to the right of each power of $A$ and transform it into $a^6$ by a sequence of $2(2k+4)$ twists, in total. This leaves us with the braid
$$aba^3B(aB)^ka^2Ba^3B(aB)^ka^2B.$$
Sliding the half-twist $aba$ from the left to the middle yields
$$b^2A(bA)^kb^2abaBa^3B(aB)^ka^2B.$$
Then we transform the middle part $b^2abaBa^3B=b^3a^4B$ into the empty braid by three crossing changes. What remains is the braid $b^2Aa^2B$, whose closure is the trivial knot. Therefore $t(T(3,6k+16)) \leq 2(2k+4)+3=4k+11$, in accordance with statement (1).

The second family, $T(3,6k+19)$, works in complete analogy, using the expression
$$T(3,6k+19)=L[ab(ab)^{6(2k+5)}(BA)^{6k+12}]$$
and
$$(BA)^{6k+12}=A^3B(A^5B)^{k+1}A^3BA^4B(A^5B)^kA^4B.$$
The resulting intermediate braid, after a sequence of $2(2k+5)$ twists, is
$$aba^3B(aB)^{k+1}a^3Ba^2B(aB)^ka^2B=b^2A(bA)^{k+1}b^3Ab^2abaB(aB)^ka^2B.$$
Again, the middle part $b^3Ab^2ab$ transform into the empty braid by three crossing changes. The remaining braid is $b^2aB$, whose closure is the trivial knot.
This shows $t(T(3,6k+19)) \leq 2(2k+5)+3=4k+13$, in accordance with statement (1).
\end{proof}

As mentioned before, the inequalities
$$\widehat{\sigma}(K)/2 \leq \gt(K) \leq t(K)$$
hold for all knots $K$. To complete the proof of \cref{3torus}, we estimate the maximal Levine-Tristram signature $\widehat{\sigma}$ for the 3-strand torus knots. Let $K$ be a knot and consider the jump function $$\delta_K(x) = \lim_{s \to x^+} \sigma_{e^{2\pi i s}}(K) - \lim_{s \to x^-} \sigma_{e^{2\pi i s}}(K).$$ Litherland (see the comments after Proposition 1 in \cite{Lit}) notes that the discontinuities of the Levine-Tristram signature of the $T(p,q)$ torus knot occur precisely at the $x \in (0,1)$ satisfying $pqx \in \mathbb{Z}$ but $px \notin \mathbb{Z}$ and $qx \notin \mathbb{Z}$. For $x$ a discontinuity, let $pqx = pa + qb$ with $0 < a < q$. Litherland also shows that $\delta(x) = +2$ if $b < 0$ and $\delta(x) = -2$ if $b > 0$. If $x$ is the minimal discontinuity satisfying $x \geq \frac{1}{2}$, then $\widehat{\sigma}(K) \geq \sigma(K) + \delta_K(x)$. We distinguish the following cases.

\begin{description}
    \item[$T(3, 3k+4)$ for $k$ even] The periodicity of the ordinary signature, see Theorem 5.2 in \cite{GLM}, implies that $\widehat{\sigma}(T(3,3k+4)) \geq \sigma(T(3,3k+4)) = \sigma(T(3,4)) + 4k = 6 + 4k$.
    \item[$T(3, 3k+5)$ for $k$ even] Similarly to the previous case,\\ $\widehat{\sigma}(T(3,3k+5)) \geq 8 + 4k$.
    \item[$T(3, 3k+4) = T(3,6l+7)$ for $k = 2l+1$ odd] The minimal $x \geq \frac{1}{2}$ satisfying 
    $pqx \in \mathbb{Z}, px \notin \mathbb{Z}, qx \notin \mathbb{Z}$ is $x = \frac{9l+11}{3(6l+7)} $. Note that $pqx = 9l + 11 = 3(5l+6) + (6l + 7)(-1)$, which implies that $\delta(x) = +2$. The estimate $\widehat{\sigma} \geq \sigma + \delta(x)$ and the periodicity of the ordinary signature imply $\widehat{\sigma}(T(3, 3k+4)) \geq \sigma(T(3,3k+4) + 2 = \sigma(T(3,3(k+1)+1) = 4(k+1) + 2 = 4k+6$.
    \item[$T(3, 3k+5) = T(3,6l+8)$ for $k = 2l+1$ odd] Similar to the previous case, the minimal discontinuity $x \geq \frac{1}{2}$ is $x = \frac{9l+13}{3(6l+8)}$. From $pqx = 9l + 13 = 3(5l + 7) + (6l+8)(-1)$ it follows that $\delta(x) = 2$ and then $\widehat{\sigma}(T(3, 3k+4) \geq 4k+8$.
\end{description}
Therefore, for all $k \in \N$,
$$\widehat{\sigma}(T(3,3k+4)) \geq 4k+6, \enspace \widehat{\sigma}(T(3,3k+5)) \geq 4k+8,$$
and \cref{3torus} now follows.

\section{Untwisting torus knots with four and six strands}\label{sec:46}
To prove \cref{thm:46},
we essentially untwist torus knots with four and six strands to torus knots on three strands, and conclude using \cref{3torus}.

By a similar calculation as was presented at the end of the previous section for 3-strand torus knots,
one may prove that $\widehat{\sigma}(T(4,n)) \geq 2n$ and $\widehat{\sigma}(T(6,n)) \geq 3n + 1$.
For this, consider the signature (for $T(4,4k+3)$ and $T(6,k+5)$),
or the Levine-Tristram signature after the first jump after $1/2$ (for $T(4,4k+1)$), or after the second jump (for $T(6,6k+1)$). We note (without proof) that the stated inequalities for $\widehat{\sigma}$ are in fact equalities.

Now, let us show that $t(T(4,n)) \leq n + 1$.
Denote the standard Artin generators of the braid groups $B_4$ by $a,b,c$.
The crucial move is to transform three full twists on four strands, $(abc)^{12}$, into four full twists on three strands, $(bc)^{12}$ (or $(ab)^{12}$),
by four twist operations.
This can be seen by composing the braids in \cref{fig:4} with $(bc)^9$.

\begin{figure}[ht]
\centering
$$\raisebox{-16mm}{\includegraphics[scale=1.15]{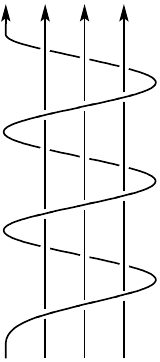}}
\enspace = \enspace
\raisebox{-16mm}{\includegraphics[scale=1.15]{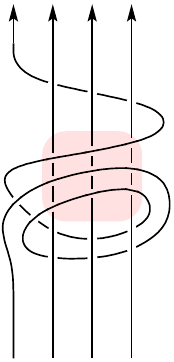}}
\enspace \rightarrow \enspace
\raisebox{-16mm}{\includegraphics[scale=1.15]{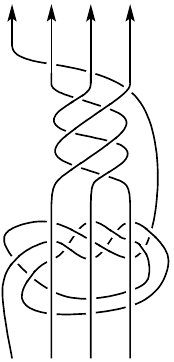}}
\enspace \rightarrow \enspace
\raisebox{-16mm}{\includegraphics[scale=1.15]{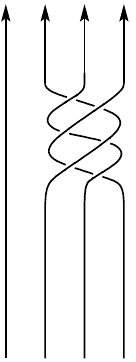}}
$$
\caption{Sequence of four twists. The first twist is on six strands, marked in red. The other three twists are crossing changes, untying the $T(3,4)$ summand in the third drawing.}
\label{fig:4}
\end{figure}

Hence, for $n = 12k + \varepsilon$, with $\varepsilon\in\{\pm 1,\pm 3,\pm 5\}$,
one may change
 $T(4,n) = L[(abc)^{n}]$
by $4k$ twists
into $L[(ab)^{12k}(abc)^{\varepsilon}]\eqqcolon K_{\varepsilon}$.
We now consider the possible values of $\varepsilon$ one by one, showing $t(T(4,n))\leq n + 1$ in each case, thus completing the proof of the first part of \cref{thm:46}.
\begin{itemize}
\item $K_1$ is in fact $T(3,12k+1)$, which may be untwisted by $8k+1$ twists, as established previously. In total, $t(T(4,n)) = n$.
\item Similarly $K_{-1} = T(3,12k-1)$, which may be untwisted by $8k$ twists, resulting in $t(T(4,n)) \leq n + 1$.
\item $K_3 = L[(ab)^{12k}(abc)^3] = L[(ab)^{12k}aba^2b^2ab] = T(3,12k+4)$, which may be untwisted by $8k + 3$ twists. In total $t(T(4,n)) = n$.
\item Similarly, $K_{-3} = T(3,12k - 4)$, which may be untwisted by $8k - 2$ twists, giving a total of $t(T(4,n)) \leq n + 1$.
\item $K_5$ can be transformed into $K_1$ by a twist on two strands and four crossing changes (cf.~\cref{fig:twist}), in total $t(T(4,n)) \leq n + 1$.
\item Similarly, $K_{-5}$ can be transformed into $T(3,12(k-1)+1)$ by five twists, resulting in $t(T(4,n)) \leq n + 1$.
\end{itemize}
This concludes the proof of the first half of \cref{thm:46}.

To show $t(T(6,n)) \leq (3n+3)/2$, denote the standard Artin generators of $B_6$ by $a,b,c,d,e$, respectively.
The full twist $(abcde)^6$ on six strands may be transformed by a single twist into
 $(ab)^6(de)^6$, see~\cref{fig:twist} for the analogous operation on four instead of six strands.
Applying this $k$ times to $T(6,6k\pm 1)$ yields the connected sum of two copies of $T(3,6k\pm 1)$,
which is finished off using \cref{3torus}. Summing up, the second half of \cref{thm:46} follows.

\section{Asymptotic genus ratio}
The key point in McCoy's induction scheme is that a positive full twist in a braid with $2n$ strands can be transformed into two parallel copies of positive double full twists in $n$ strands, with a single twist operation (see Lemma~13 in~\cite{MC}). This is shown in \cref{fig:twist}, for $n=2$, and was used in the previous section for $n=2$ and $n=3$. Similarly, a single twist operation transforms the torus knot $T(2n,2n+1)$ into the connected sum of two copies of the torus knot  $T(n,2n+1)$. This is seen by adding the word $\sigma_1 \sigma_2 \ldots \sigma_{2n-1}$ to both braids in the same figure.

\begin{figure}[ht]
\centering
$$\raisebox{-8mm}{\includegraphics[scale=1.6]{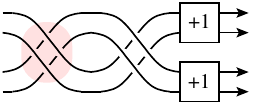}}
\enspace \rightarrow \enspace
\raisebox{-8mm}{\includegraphics[scale=1.6]{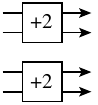}}
$$
\caption{Untwisting a full twist on four strands, marked in red. The numbers $+1(+2)$ stand for a (double) positive full twist.}
\label{fig:twist}
\end{figure}

When iterating this operation on successive powers of two, one gets $2/3$ as an upper bound for the asymptotic ratio $\gt/g$ for torus knots with increasing parameters. We will apply the same procedure, starting from braids with $3$ strands, successively multiplying the strand number by two:

\begin{enumerate}
\item $T(6,7)$ transforms into the disjoint union of two copies of $T(3,7)$ by one twist,
\item $T(12,13)$ transforms into the disjoint union of two copies of $T(6,13)$ by one twist, then into the disjoint union of four copies of $T(3,13)$ by 4 more twists,
\item $T(24,25)$ transforms into the disjoint union of two copies of $T(12,25)$ by one twist, then into the disjoint union of eight copies of $T(3,25)$ by $4(1+4)=20$ more twists,
\item $T(3 \cdot 2^k,3 \cdot 2^k+1)$ transforms into the disjoint union of $2^k$ copies of $T(3,3 \cdot 2^k+1)$ by a total number of $1+4+16+\ldots+4^{k-1}=1/3 \cdot (4^k-1)$ twists.
\end{enumerate}

By \cref{3torus}, the untwisting number of $T(3,3 \cdot 2^k+1)$ is of the order
$$2/3 \cdot (3 \cdot 2^k)=2^{k+1}.$$
We conclude that the untwisting number of $T(3 \cdot 2^k,3 \cdot 2^k+1)$ is bounded above by an expression of the order
$$1/3 \cdot (4^k-1)+2^k \cdot 2^{k+1} \approx (1/3+2) \cdot 4^k,$$
while its Seifert genus is of the order
$$1/2 \cdot (3 \cdot 2^k)^2=9/2 \cdot 4^k,$$
by the well-known genus formula $g(T(p,q))=1/2 \cdot (p-1)(q-1)$. In summary,
$$\limsup_{k \to \infty} \frac{\gt(T(3 \cdot 2^k,3 \cdot 2^k+1))}{g(T(3 \cdot 2^k,3 \cdot 2^k+1))} \leq \frac{1/3+2}{9/2}=\frac{14}{27}.$$
The existence of the more general upper bound,
$$\limsup_{p,q \to \infty} \frac{\gt(T(p,q))}{g(T(p,q))} \leq \frac{14}{27},$$
follows from a general principle on subadditive functions, see the proof of Theorem~5 in~\cite{MC}, or the paragraph preceding Theorem~2 in~\cite{BFLL}.
We are left with the strong belief that the ratio tends to $1/2$, in accordance with the asymptotic behaviour of the signature invariant:
$$\lim_{p,q \to \infty} \frac{\sigma(T(p,q))}{2g(T(p,q))}=\frac{1}{2}.$$

\bigskip
\noindent
Mathematisches Institut, Sidlerstr.~5, 3012 Bern, Switzerland

\smallskip
\noindent
Mathematisches Institut, Sidlerstr.~5, 3012 Bern, Switzerland

\smallskip
\noindent
Uni Regensburg, Fakult\"at f\"ur Mathematik, 93053 Regensburg, Germany

\newcommand{\myemail}[1]{\texttt{\href{mailto:#1}{#1}}}

\bigskip

\smallskip
\noindent
\myemail{sebastian.baader@math.unibe.ch}

\smallskip
\noindent
\myemail{ian.banfield@math.unibe.ch}

\smallskip
\noindent
\myemail{lukas@lewark.de}


\begin{thebibliography}{99} 

\bibitem{Ba}
    S.~Baader: \emph{Positive braids of maximal signature}, Enseign. Math.~59 (2013), no.~3-4, 351--358.

\bibitem{BFLL}
    S.~Baader, P.~Feller, L.~Lewark, L.~Liechti: \emph{On the topological 4-genus of torus knots}, Trans. Amer. Math. Soc.~370 (2018), no.~4, 2639--2656.

\bibitem{Fr}
    M.~H.~Freedman: \emph{The topology of four-dimensional manifolds}, J.~Differential Geom.~17 (1982), no.~3, 357--453.

\bibitem{GLM}
    C.~McA. Gordon, R.~A.~Litherland, K.~Murasugi: \emph{Signatures of covering links}, Canad. J. Math.~33 (1981), 381--394.

\bibitem{In}
    K.~Ince: \emph{The untwisting number of a knot}, Pacific J. Math.~283 (2016), no.~1, 139--156. 

\bibitem{Lit}
    R.~A.~Litherland: \emph{Signatures of iterated torus knots}, Topology of low-dimensional manifolds (Proc. Second Sussex Conf., Chelwood Gate, 1977), pp.~71--84, Lecture Notes in Math.~722, Springer, Berlin, 1979. 

\bibitem{Liv}
    C.~Livingston: \emph{Null-homologous unknotting}, arXiv:1902.05405.

\bibitem{KM}
    P.~B.~Kronheimer, T.~S.~Mrowka: \emph{The genus of embedded surfaces in the projective plane}, Math. Res. Lett.~1 (1994), no.~6, 797--808. 

\bibitem{MC}
    D.~McCoy: \emph{Null-homologous twisting and the algebraic genus}, arXiv:1908.04043.

\bibitem{MC2}
    D.~McCoy: \emph{Gaps between consecutive untwisting numbers}, Glasgow Math. J.~63 (2021), no.~1, 59--65.

\bibitem{Po} M.~Powell: \emph{The four-genus of a link, Levine-Tristram signatures and satellites}, J.~Knot Theory Ramifications~26 (2017), no.~2.

\bibitem{Ra}
    J.~Rasmussen: \emph{Khovanov homology and the slice genus}, Invent. Math.~182 (2010), no.~2, 419--447. 

\bibitem{Ru1}
    L.~Rudolph: \emph{Some topologically locally-flat surfaces in the complex projective plane}, Comment. Math. Helv.~59 (1984), no.~4, 592--599.

\bibitem{Ru2}
    L.~Rudolph: \emph{Quasipositivity as an obstruction to sliceness}, Bull. Amer. Math. Soc. (N.S.)~29 (1993), no.~1, 51--59.

\end{thebibliography}
\end{document}